    \newcommand{\be}{\begin{equation}}
    \newcommand{\ee}{\end{equation}}
    \newcommand{\nrm}[1]{\left\| #1 \right\|}
    \def\g{\mbox{\boldmath $g$}}
    \newcommand\dt {{\Delta t}}
     \def\0{\mbox{\boldmath $0$}}
\renewcommand{\d}{\mathrm{d}}
\renewcommand{\i}{\mathrm{i}}
\newcommand{\R}{\mathbb{R}}
\newcommand{\Z}{\mathbb{Z}}
\newcommand{\bx}{{\bf x}}
\newcommand{\by}{{\bf y}}
\newcommand{\hL}{\mathcal{L}}
\newcommand{\Mh}{\mathcal{M}_h}
\newcommand{\ds}{\displaystyle}
\newcommand{\eps}{\varepsilon}
\newcommand{\os}{\,\textcircled{$*$}\,}
\newcommand{\<}{\langle}
\renewcommand{\>}{\rangle}
\newcommand{\Hp}{H_\text{\rm per}}
\newcommand{\Cp}{C_\text{\rm per}}
    \def\g{\mbox{\boldmath $g$}}
\newtheorem{theorem}{Theorem}[section]
\newtheorem{corollary}[theorem]{Corollary}
\newtheorem{lemma}[theorem]{Lemma}
\newtheorem{remark}{Remark}
\title{Convergence analysis for a stabilized linear semi-implicit numerical scheme for the nonlocal Cahn--Hilliard equation}
	\author{Xiao Li\thanks{Department of Applied Mathematics, The Hong Kong
Polytechnic University, Hung Hom, Kowloon, Hong Kong ({\tt  xiao1li@polyu.edu.hk})}
        \and
Zhonghua Qiao\thanks{Corresponding author. Department of Applied Mathematics, The Hong Kong
Polytechnic University, Hung Hom, Kowloon, Hong Kong ({\tt  zqiao@polyu.edu.hk})}
        \and
Cheng Wang\thanks{Mathematics Department, The University of Massachusetts, North Dartmouth, MA, USA
({\tt cwang1@umassd.edu})}}
\date{}
\begin{document}

\maketitle

\begin{abstract}
In this paper, we provide a detailed convergence analysis for a first order stabilized linear semi-implicit numerical scheme for the nonlocal Cahn--Hilliard equation, which follows from consistency and stability estimates for the numerical error function. Due to the complicated form of the nonlinear term, we adopt the discrete $H^{-1}$ norm for the error function to establish the convergence result. In addition, the energy stability obtained in [Du et al., \emph{J. Comput. Phys.}, 363:39--54, 2018] requires an assumption on the uniform $\ell^\infty$ bound of the numerical solution
and such a bound is figured out in this paper by conducting the higher order consistency analysis.
Taking the view that the numerical solution is indeed the exact solution with a perturbation,
the error function is $\ell^\infty$ bounded uniformly under a loose constraint of the time step size,
which then leads to the uniform maximum-norm bound of the numerical solution.

\end{abstract}

{\bf 2010 Mathematics Subject Classification.} {Primary 35Q99, 65M12, 65M15, 65M70}\\
	
{\bf Keywords.}
Nonlocal Cahn--Hilliard equation, stabilized linear scheme, convergence analysis, higher order consistency expansion

\section{Introduction}

In this paper, our primary purpose is to develop a detailed convergence analysis
of a stabilized linear semi-implicit numerical scheme for the nonlocal Cahn--Hilliard (NCH) equation taking the form
\cite{bates06a,du18,guan14a}
\begin{equation}
\label{nonlocalCH}
u_t=\Delta(u^3-u+\eps^2\hL u),\quad(\bx,t)\in\Omega\times(0,T],
\end{equation}
where $u=u(\bx,t)$ is the unknown function
subject to 
the periodic boundary condition.
Here, $\Omega=\prod\limits_{i=1}^d(-X_i,X_i)$ is a rectangular domain in $\R^d$,
$T>0$ is the terminal time, $\eps>0$ is an interfacial parameter,
and $\hL$ is a nonlocal linear operator defined by
\begin{equation}
\hL:v(\bx)\mapsto\int_\Omega J(\bx-\by)(v(\bx)-v(\by))\,\d\by,
\end{equation}
where $J$ is a kernel function satisfying following conditions \cite{du18}:
\begin{flushleft}
(a) $J(\bx)\ge0$ for any $\bx\in\Omega$;\\
(b) $J$ is even, i.e., $J (\bx) = J (-\bx)$ for any $\bx \in \R^d$; \\
(c) $J$ is $\Omega$-periodic;\\
(d) $\ds\frac{1}{2}\int_\Omega J(\bx)|\bx|^2\,\d \bx=1$,
\end{flushleft}
where the condition (d) means that the kernel has a finite second order moment in $\Omega$. In fact, $J$ could be taken as a radial function over the domain $\Omega=\prod\limits_{i=1}^d(-X_i,X_i)$, with exponential decay at the boundary, such as $J (\bx) = \alpha \exp \left(- \frac{|\bx|^2}{\sigma^2} \right)$ (for a small $\sigma$), and a periodic extension is made to $\R^d$, so that both (b) and (c) are satisfied. The NCH equation \eqref{nonlocalCH} can be viewed as the $H^{-1}$ gradient flow
with respect to the free energy functional
\begin{equation}
\label{nonlocalenergy2}
E(u)=\int_\Omega F(u(\bx))\,\d \bx+\frac{\eps^2}{2}(\hL u,u)_{L^2},
\end{equation}
with $F(u)=\frac{1}{4}(u^2-1)^2$, or equivalently, by using the condition (b),
\begin{equation}
\label{nonlocalenergy1}
E(u)=\int_\Omega\Big(F(u(\bx))+\frac{\eps^2}{4}\int_\Omega J(\bx-\by)(u(\bx)-u(\by))^2\,\d\by\Big)\,\d\bx.
\end{equation}
The second term in \eqref{nonlocalenergy1} usually represents the interaction energy,
describing the long-range interactions between atoms at different sites,
and the kernel $J$ measures the strength of interactions.
Using the Taylor formula, the periodicity of $u$, and the conditions (b)-(d) of $J$, one can show that
$$\frac{\eps^2}{4}\int_\Omega J(\bx-\by)(u(\bx)-u(\by))^2\,\d\by\approx\frac{\eps^2}{2}|\nabla u(\bx)|^2,$$
which suggests that the classic Cahn--Hilliard equation \cite{cahn58}
\begin{equation}
\label{localCH}
u_t=\Delta(u^3-u-\eps^2\Delta u),
\end{equation}
corresponding to the local energy functional
\begin{equation}
E_{\text{local}}(u)=\int_\Omega\Big(F(u(\bx))+\frac{\eps^2}{2}|\nabla u(\bx)|^2\Big)\,\d \bx,
\end{equation}
is an approximation of the NCH equation \eqref{nonlocalCH}
under the assumption that the interaction exists only in a very short range.

If $J$ is further integrable, then $J*1=\ds\int_\Omega J(\bx)\,\d \bx>0$ is a positive constant and
\begin{equation}
\hL v=(J*1)v-J*v,
\end{equation}
where
$$(J*v)(\bx)=\int_\Omega J(\bx-\by)v(\by)\,\d \by=\int_\Omega J(\by)v(\bx-\by)\,\d \by$$
is exactly the periodic convolution \cite{guan14a}.
In this case, the NCH equation \eqref{nonlocalCH} can be written as
$$u_t=\nabla\cdot(a(u)\nabla u)-\eps^2\Delta J*u,$$
where $a(u)=3u^2-1+\eps^2J*1$ is referred as the diffusive mobility.
If
\begin{equation}
\label{assump_gamma}
\gamma_0:=\eps^2J*1-1>0,
\end{equation}
which gives $a(u)>0$, then the equation \eqref{nonlocalCH} becomes diffusive and the solution becomes regular in time;
otherwise, the solution may exhibit some singular behaviors.
Throughout this paper, we always assume that the kernel $J$ is integrable with the condition \eqref{assump_gamma} held.

As one of typical systems of the phase field models,
the classic Cahn--Hilliard equation \eqref{localCH} has been successfully used to model
phase transitions occurring in mixtures of small molecules
and some other interface problems involving mass-conserved order parameters.
Recently, the NCH equation \eqref{nonlocalCH} has attracted increasing attentions
and been used in various fields ranging from materials science to finance and image processing.
For instance, in materials science, the NCH equation and other related equations
arise as mesoscopic models of interacting particle systems \cite{archer04b,hornthrop01}
and are taken to model phase transitions \cite{fife03};
in the dynamic density functional theory \cite{archer04a,archer04b},
the interaction kernel is the two-particle direct correlation function
and the solution represents the mesoscopic particle density.
In the theoretical level,
the well-posedness of the NCH equations equipped with Neumann or Dirichlet boundary condition
were investigated by Bates and Han \cite{bates05b,bates05a} by assuming the integrability of the kernel.
Du et al. \cite{du12a} developed a general framework of nonlocal diffusion problems and
a number of examples ranging from continuum mechanics to graph theory were showed to be special cases of the proposed framework.
For more details on theoretical investigations, see \cite{bates06a,bates06b,fife03,gajewski03,GAL20175253,giacomin98}.

There have been several works on numerical analysis for nonlocal models.
For a class of nonlocal diffusion models with variable boundary conditions,
finite difference and finite element approximations were addressed in \cite{tian13,tian14,zhou10}.
For the nonlocal Allen--Cahn equation, the $L^2$ gradient flow with respect to \eqref{nonlocalenergy1},
Bates et al. \cite{bates09} developed an $L^\infty$ stable and convergent finite difference scheme by treating the nonlinear and nonlocal terms explicitly
and Du et al. \cite{du16} analyzed the spectral-Galerkin approximations.
In addition, the maximum principle preserving property has been established for the exponetial time differencing (ETD) schemes in a more recent work~\cite{du19}.
For the NCH equation,
an important fact is that the exact solution decreases the energy in time
due to the energetic variational structure of the underlying model,
so it is highly desirable to develop numerical algorithms inheriting this property of energy stability at the discrete level.
Energy stability has been widely investigated for numerical schemes of a family of classic PDE-based phase field models,
such as convex splitting schemes \cite{eyre98,qiao14,wise09}, stabilized schemes \cite{shen10,xu06}, and so on.
The application of similar analysis for nonlocal phase field models are still full of challenges
due to the lack of the higher order diffusion term.
Guan et al. \cite{guan17a,guan14b,guan14a} constructed
convex splitting schemes for the NCH equation and nonlocal Allen--Cahn equation
by treating the nonlinear term implicitly and putting the nonlocal term into the explicit part.
The proposed scheme allows one to evaluate the nonlocal term explicitly only once at each time step,
but iterations are inevitable due to the nonlinearity of the scheme.

In order to avoid the nonlinear iteration, in a recent work \cite{du18},
a linear semi-implicit scheme has been developed by using the stabilizing approach.
The linear nonlocal term is set in the implicit level
and solved efficiently in the frequency space by using the fast Fourier transform (FFT) technique
due to the linearity of the resulted fully discrete system.
The first order stabilized linear semi-implicit (SSI1) scheme given in \cite{du18} reads
\begin{equation}
\label{scheme-1st-1}
\frac{u^{n+1} - u^n}{\dt} = \Delta_N \big[ (u^n)^3 - u^n + A (u^{n+1} - u^n) + \eps^2 \hL_N u^{n+1} \big],
\end{equation}
and the energy stability has been proved, that is,
$E_N(u^{n+1})\le E_N(u^n)$ if the stabilizing constant $A$ satisfies
\begin{equation}
\label{condition-A-0}
A \ge \frac{1}{2} \| u^{n+1} \|_\infty^2 + \| u^n \|_\infty^2 - \frac{1}{2}.
\end{equation}
Here, $E_N$, $\Delta_N$ and $\hL_N$ are the spatially discretized forms of the operators $E$, $\Delta$ and $\hL$, respectively,
and their precise definitions will be given in the next section.
Notice that the infinity-norms of the numerical solutions at time steps $t_n$ and $t_{n+1}$
have been involved on the right hand side of \eqref{condition-A-0}.
However, such a lower bound for constant $A$ has not been justified.

We aim to justify the lower bound of $A$ in this paper.
A direct analysis provided in \cite{LiD2017, LiD2017b, LiD2016a} for the local Cahn--Hilliard model
could hardly be extended to the case of nonlocal models due to the lack of higher order diffusion terms.
Instead, we view the numerical solution as a perturbation of the exact solution to \eqref{nonlocalCH},
perform a local in time convergence analysis,
and obtain the $\ell^\infty$ bound of the numerical solution via the convergence result.
All the analysis will be specified in the 2-D case,
similar results can be obtained for the 1-D and 3-D cases without any extra essential difficulties.

The outline of the paper is as follows.
Some notations and lemmas for the spectral collocation method for the spatial discretization are summarized in Section 2.
The convergence analysis, as well as the $\ell^\infty$ bound of the numerical solutions,
of the first order stabilized linear semi-implicit scheme \eqref{scheme-1st-1} is presented in Section 3.
Finally, some concluding remarks are given in Section 4.

\section{Spectral collocation method for the spatial discretization}


In this section, we summarize some notations and lemmas introduced in \cite{du18}
for the spectral collocation approximations
of some spatial operators in the two-dimensional space with $\Omega=(-X,X)\times(-Y,Y)$.

Let $N_x$ and $N_y$ be two even numbers.
The $N_x\times N_y$ mesh $\Omega_h$ of the domain $\Omega$ is
a set of nodes $(x_i,y_j)$ with $x_i=-X+ih_x$, $y_j=-Y+jh_y$, $1\le i\le N_x$, $1\le j\le N_y$,
where $h_x=2X/N_x$ and $h_y=2Y/N_y$ are the uniform mesh sizes in each direction.
Let $h=\max\{h_x,h_y\}$.
We define the index sets
\begin{align*}
S_h & =\{(i,j)\in\Z^2\,|\,1\le i\le N_x,\ 1\le j\le N_y\},\\
\widehat{S}_h & =\Big\{(k,l)\in\Z^2\,\Big|\,-\frac{N_x}{2}+1\le k\le\frac{N_x}{2},\ -\frac{N_y}{2}+1\le l\le\frac{N_y}{2}\Big\}.
\end{align*}
All of the periodic grid functions defined on $\Omega_h$ are denoted by $\Mh$, that is,
$$\Mh=\{f:\Omega_h\to\R\,|\,\text{$f_{i+mN_x,j+nN_y}=f_{ij}$ for any $(i,j)\in S_h$ and $(m,n)\in\Z^2$}\}.$$
For any $f,g\in\Mh$ and $\bm{f}=(f^1,f^2)^T,\bm{g}=(g^1,g^2)^T\in\Mh\times\Mh$,
the discrete $L^2$ inner product $\<\cdot,\cdot\>$, discrete $L^2$ norm $\|\cdot\|_2$,
and discrete $L^\infty$ norm $\|\cdot\|_\infty$ are respectively defined  by
\[
\begin{array}{ll}
\ds\<f,g\>=h_xh_y\sum_{(i,j)\in {S}_h}f_{ij}g_{ij}, & \quad\ds\<\bm{f},\bm{g}\>=h_xh_y\sum_{(i,j)\in {S}_h}(f^1_{ij}g^1_{ij}+f^2_{ij}g^2_{ij}),\\
\ds\|f\|_2=\sqrt{\<f,f\>}, & \quad\ds\|\bm{f}\|_2=\sqrt{\<\bm{f},\bm{f}\>},\\
\ds\|f\|_\infty=\max_{(i,j)\in S_h}|f_{ij}|,
& \quad\ds\|\bm{f}\|_\infty=\max_{(i,j)\in S_h}\sqrt{| f^1_{ij} |^2 + | f^2_{ij} |^2}.
\end{array}
\]
For any $f\in\Mh$, we call $\overline{f}:=\frac{1}{4XY}\<f,1\>$ the mean value of $f$.
In particular, denote by $\Mh^0$ all the grid functions in $\Mh$ with mean zero, i.e.,
$$\Mh^0=\{f\in\Mh\,|\,\<f,1\>=0\}.$$

\subsection{Discrete gradient, divergence and Laplace operators}

For a function $f\in\Mh$, the 2-D discrete Fourier transform $\hat{f}=Pf$ is defined componentwisely  \cite{ShTaWa11,Tre00} by
\begin{equation}
\label{eq_distransform}
\hat{f}_{kl}=\sum_{(i,j)\in S_h}f_{ij}\exp\Big(-\i\frac{k\pi}{X}x_i\Big)\exp\Big(-\i\frac{l\pi}{Y}y_j\Big),\qquad(k,l)\in\widehat{S}_h.
\end{equation}
The function $f$ can be reconstructed via the corresponding inverse transform $f=P^{-1}\hat{f}$ with components given by
\begin{equation}
\label{eq_disinvtransform}
f_{ij}=\frac{1}{N_xN_y}\sum_{(k,l)\in\widehat{S}_h}\hat{f}_{kl}\exp\Big(\i\frac{k\pi}{X}x_i\Big)\exp\Big(\i\frac{l\pi}{Y}y_j\Big),
\qquad(i,j)\in {S}_h.
\end{equation}

Let $\widehat{\mathcal{M}}_h=\{Pf\,|\,f\in\Mh\}$
and define the operators $\widehat{D}_x$ and $\widehat{D}_y$ on $\widehat{\mathcal{M}}_h$ as
$$(\widehat{D}_x\hat{f})_{kl}=\Big(\frac{k\pi\i}{X}\Big)\hat{f}_{kl},\quad
(\widehat{D}_y\hat{f})_{kl}=\Big(\frac{l\pi\i}{Y}\Big)\hat{f}_{kl},\quad(k,l)\in\widehat{S}_h,$$
then the Fourier spectral approximations to the first and second partial derivatives can be represented as
$$D_x=P^{-1}\widehat{D}_xP,\qquad D_y=P^{-1}\widehat{D}_yP,\qquad
D_x^2=P^{-1}\widehat{D}_x^2P,\qquad D_y^2=P^{-1}\widehat{D}_y^2P.$$
For any $f\in\Mh$ and $\bm{f}=(f^1,f^2)^T\in\Mh\times\Mh$,
the discrete gradient, divergence and Laplace operators are given respectively  by
$$\nabla_Nf=\binom{D_xf}{D_yf},\qquad\nabla_N\cdot\bm{f}=D_xf^1+D_yf^2,\qquad
\Delta_Nf=D_x^2f+D_y^2f.$$
It is easy to prove the following results.

\begin{lemma}
\label{lem_disoperator}
{\rm(i)} For any $f,g\in\Mh$ and $\bm{g}\in\Mh\times\Mh$,
we have the summation by parts formulas
$$\<f,\nabla_N\cdot\bm{g}\>=-\<\nabla_Nf,\bm{g}\>,
\qquad\<f,\Delta_Ng\>=-\<\nabla_Nf,\nabla_Ng\>=\<\Delta_Nf,g\>.$$
{\rm(ii)} The inversion of $-\Delta_N$ exists on $\Mh^0$
and $(-\Delta_N)^{-1}$ is self-adjoint and positive definite.
\end{lemma}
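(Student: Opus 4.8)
The plan is to diagonalize every operator in the discrete Fourier basis and thereby reduce each identity to an elementary fact about the corresponding Fourier multiplier. The starting point is the discrete Parseval relation
\[
\<f,g\>=\frac{h_xh_y}{N_xN_y}\sum_{(k,l)\in\widehat{S}_h}\hat f_{kl}\,\overline{\hat g_{kl}},
\]
which follows from \eqref{eq_distransform}--\eqref{eq_disinvtransform} and the orthogonality of the discrete exponentials on $\Omega_h$; for real-valued $f,g\in\Mh$ one has $\hat f_{-k,-l}=\overline{\hat f_{kl}}$, so the right-hand side is real.

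For part (i) it suffices, by linearity, to work one coordinate direction at a time. By the definition $D_x=P^{-1}\widehat{D}_xP$, the operator $D_x$ multiplies the Fourier coefficient indexed by $(k,l)$ by the purely imaginary number $\i k\pi/X$, and $D_x^2$ multiplies it by the real number $-k^2\pi^2/X^2$. Inserting these multipliers into the Parseval relation immediately gives the skew-adjointness $\<f,D_xg\>=-\<D_xf,g\>$ and the self-adjointness $\<f,D_x^2g\>=-\<D_xf,D_xg\>=\<D_x^2f,g\>$; the analogous statements hold for $D_y$. Adding the $x$- and $y$-versions and recalling $\nabla_N\cdot\bm g=D_xg^1+D_yg^2$ and $\Delta_N=D_x^2+D_y^2$ then yields the three summation-by-parts formulas. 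Equivalently, one may observe directly that the physical-space matrices of $D_x,D_y$ are skew-symmetric and that of $\Delta_N$ is symmetric, and manipulate the bilinear form $\<f,g\>=h_xh_yf^{T}g$.

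For part (ii), first note that $\Delta_N$ maps $\Mh^0$ into itself, since $\<\Delta_Nf,1\>=\<f,\Delta_N1\>=0$ because $\Delta_N1=0$. In the Fourier basis, $-\Delta_N$ acts by multiplication by $\lambda_{kl}:=k^2\pi^2/X^2+l^2\pi^2/Y^2\ge0$, with $\lambda_{kl}=0$ precisely when $(k,l)=(0,0)$. Since $\Mh^0$ consists exactly of the grid functions with $\hat f_{00}=0$, the restriction of $-\Delta_N$ to $\Mh^0$ is a bijection whose inverse multiplies the $(k,l)$ coefficient by $\lambda_{kl}^{-1}$ for $(k,l)\neq(0,0)$. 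Because this multiplier is real and strictly positive, the Parseval relation gives at once $\<(-\Delta_N)^{-1}f,g\>=\<f,(-\Delta_N)^{-1}g\>$ and
\[
\<(-\Delta_N)^{-1}f,f\>=\frac{h_xh_y}{N_xN_y}\sum_{(k,l)\neq(0,0)}\lambda_{kl}^{-1}\,|\hat f_{kl}|^2>0
\]
for every nonzero $f\in\Mh^0$, i.e.\ $(-\Delta_N)^{-1}$ is self-adjoint and positive definite on $\Mh^0$.

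There is no genuine difficulty in this lemma; the only thing to watch is the bookkeeping for the Nyquist modes $k=N_x/2$, $l=N_y/2$, which lack conjugate partners in $\widehat{S}_h$. This affects neither the diagonalization of $\Delta_N$ --- whose multiplier is real for every mode --- nor the construction of $(-\Delta_N)^{-1}$, which are all that is needed in the sequel; with the usual convention of annihilating the Nyquist mode of an odd-order derivative, the gradient/divergence identities in (i) are likewise unaffected.
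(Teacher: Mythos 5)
Your proof is correct, and since the paper states this lemma without proof (``It is easy to prove the following results''), your Fourier-multiplier diagonalization is exactly the argument the definitions $D_x=P^{-1}\widehat{D}_xP$, $\Delta_N=D_x^2+D_y^2$ are set up to make immediate: Parseval plus the fact that the symbols of $D_x,D_y$ are purely imaginary and that of $-\Delta_N$ is real, nonnegative, and vanishes only at $(k,l)=(0,0)$, which characterizes $\Mh^0$. Your explicit flagging of the Nyquist modes $k=N_x/2$, $l=N_y/2$ is a point of care the paper glosses over; as you note, it is immaterial for $\Delta_N$ and $(-\Delta_N)^{-1}$ (real symbol), and for the first-order identities it is resolved by the standard convention of zeroing the Nyquist coefficient of odd-order derivatives.
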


Lemma \ref{lem_disoperator} (ii) tells us that $(-\Delta_N)^{-1}f$ is well-defined for any $f\in\Mh^0$.
Then we can define the discrete $H^{-1}$ norm $\nrm{\,\cdot\,}_{-1,N}$ by
\begin{equation}
\nrm{f}_{-1,N} = \sqrt{\< f, (-\Delta_N)^{-1}f\>}, \quad \forall \ f\in\Mh^0.
\end{equation}

\subsection{Discrete convolution and nonlocal operators}

To define the discrete convolutions, we consider the kernel function set
$$\mathcal{K}_h=\{\psi:\Omega_{h,0}\to\R\,|\,\text{$\psi_{i+mN_x,j+nN_y}=\psi_{ij}$ for any $(i,j)\in S_h$ and $(m,n)\in\Z^2$}\},$$
where $\Omega_{h,0}=\{(ih_x,jh_y)\,|\,(i,j)\in S_h\}$ is the mesh on the domain $(0,2X)\times(0,2Y)$.
A discrete transform and its inversion of a function $\psi\in\mathcal{K}_h$ could be defined similarly via \eqref{eq_distransform} and \eqref{eq_disinvtransform}
by replacing $x_i$ and $y_j$ by $ih_x$ and $jh_y$, respectively.
Actually, $\mathcal{K}_h$ is equivalent to $\Mh$ due to the periodicity of their elements,
and we consider the functions from $\mathcal{K}_h$ as the kernels just for convenience of notations.

For any $\psi\in\mathcal{K}_h$ and $f\in\Mh$, the discrete convolution $\psi\os f\in\Mh$ is defined componentwisely by
$$(\psi\os f)_{ij}=h_xh_y\sum_{(m,n)\in S_h}\psi_{i-m,j-n}f_{mn},\qquad(i,j)\in S_h.$$
Especially, by setting $f\equiv1$ on $\Omega_h$, we have
$$\psi\os1=h_xh_y\sum_{(m,n)\in S_h}\psi_{mn}.$$
%
The following preliminary estimate is needed in the convergence analysis.

\begin{lemma}
\label{lem:1}
Suppose $\mathsf{J} \in \Cp^1(\Omega)$ and define its grid restriction by $J_{ij}:=\mathsf{J}(x_i,y_j)$.
Then for any $f,g\in\Mh$, we have
\begin{equation}
\label{lem 1:0}
| \< J \os f, \Delta_N g \> | \le \alpha \| f \|^2_2 + \frac{C}{\alpha} \| \nabla_N g \|^2_2 ,
\end{equation}
for any $\alpha > 0$,
where $C$ is a positive constant that depends on $\mathsf{J}$ but is independent of $h_x$ and $h_y$.
\end{lemma}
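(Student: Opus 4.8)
The plan is to move the discrete Laplacian off of $g$ using the summation-by-parts formula from Lemma~\ref{lem_disoperator}(i), namely $\< J \os f, \Delta_N g \> = - \< \nabla_N (J \os f), \nabla_N g \>$, and then bound the result by Cauchy--Schwarz followed by Young's inequality. Concretely, once the derivative is transferred we have $|\< J \os f, \Delta_N g \>| \le \|\nabla_N (J \os f)\|_2 \, \|\nabla_N g\|_2 \le \frac{\alpha}{2} \|\nabla_N(J \os f)\|_2^2 \cdot \frac{1}{?}$-type estimates; the cleaner route is to apply Young's inequality in the form $ab \le \alpha a^2 + \frac{1}{4\alpha} b^2$ with $a = \|\nabla_N(J \os f)\|_2$ rescaled appropriately, so that the $\|\nabla_N g\|_2^2$ term carries the $\frac{C}{\alpha}$ weight. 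Thus the whole lemma reduces to the single estimate
\[
\|\nabla_N (J \os f)\|_2 \le C \| f \|_2,
\]
with $C$ depending only on $\mathsf J$ and not on $h_x, h_y$.

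The key observation for this reduction is that the discrete gradient commutes with discrete convolution in the kernel slot: differentiating $J \os f$ in $x$ or $y$ is the same as convolving $f$ against the discretized derivative kernel. This should follow from the definition of $D_x, D_y$ via the Fourier symbols together with the discrete convolution theorem (convolution becomes multiplication on the Fourier side), giving $D_x (J \os f) = (D_x J) \os f$ and similarly for $D_y$, where $D_x J, D_y J$ are the spectral derivatives of the grid kernel. Here the hypothesis $\mathsf J \in \Cp^1(\Omega)$ enters: since $\mathsf J$ is $C^1$ and periodic, its grid restriction $J_{ij} = \mathsf J(x_i, y_j)$ has spectral derivatives $D_x J, D_y J$ whose discrete $\ell^2$ norms are bounded, uniformly in the mesh, by $\|\partial_x \mathsf J\|_{L^2(\Omega)}$, $\|\partial_y \mathsf J\|_{L^2(\Omega)}$ up to fixed constants (this is a standard aliasing/consistency estimate for the Fourier collocation derivative of a smooth periodic function). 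One then invokes the discrete Young's inequality for convolution, $\|\psi \os f\|_2 \le (\|\psi\|_1) \|f\|_2$ or more simply $\|\psi \os f\|_2 \le \sqrt{4XY}\,\|\psi\|_2 \|f\|_2$ via Cauchy--Schwarz on the convolution sum, to conclude $\|D_x(J \os f)\|_2 = \|(D_x J)\os f\|_2 \le C \|f\|_2$ and likewise in $y$; adding the squares gives $\|\nabla_N(J \os f)\|_2 \le C\|f\|_2$.

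The main obstacle is justifying the mesh-independent bound on $\|D_x J\|_2$ and $\|D_y J\|_2$, i.e.\ that the Fourier collocation derivative of the sampled kernel does not blow up as $h \to 0$. This requires only $C^1$ regularity of $\mathsf J$, which is exactly the hypothesis, but care is needed because the collocation derivative of a merely $C^1$ function is not automatically controlled in $L^\infty$; the statement is saved by working in the discrete $\ell^2$ norm, where one can compare $D_x J$ against the grid restriction of $\partial_x \mathsf J$ and control the difference by the aliasing error, which tends to zero (or at least stays bounded) for $C^1$ data. An alternative, perhaps cleaner, route that sidesteps the collocation-derivative technicality entirely is to integrate by parts \emph{before} discretizing — i.e.\ observe that $\< J \os f, \Delta_N g\>$ can be rewritten, via the summation-by-parts identity, as a sum over grid points of $(\nabla_N(J\os f)) \cdot (\nabla_N g)$ and then estimate $\nabla_N (J \os f)$ pointwise by recognizing it as a Riemann-sum approximation of $\int_\Omega \nabla_{\!\bx} \mathsf J(\bx - \by) f(\by)\,\d\by$, whose sup-norm is bounded by $\|\nabla \mathsf J\|_{L^\infty} \|f\|_{L^1} \le C \|f\|_2$. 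Either way, the constant $C$ is manifestly independent of $h_x, h_y$ and depends only on first derivatives of $\mathsf J$.
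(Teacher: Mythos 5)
Your proposal follows essentially the same route as the paper's proof: summation by parts to move $\Delta_N$ off of $g$, the commutation identity $\nabla_N (J \os f) = (\nabla_N J) \os f$, a mesh-independent bound on the discrete derivative of the sampled kernel coming from $\mathsf{J} \in \Cp^1(\Omega)$, and then Cauchy--Schwarz plus Young's inequality. The only difference is that the paper uses the crude estimate $\| \nabla_N (J \os f) \|_2 \le |\Omega| \, \| \nabla_N J \|_\infty \, \| f \|_2$ (simply asserting $\|\nabla_N J\|_\infty \le C_{\mathsf{J}}$ from smoothness) in place of your discrete Young's inequality for convolutions, so the technical point you flag about controlling the collocation derivative of a merely $C^1$ kernel is present --- and in fact glossed over --- in the paper as well.
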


\begin{proof}
An application of summation by parts and Cauchy--Schwarz inequality shows that
\begin{equation}
| \< J \os f, \Delta_N g \> | = | \< \nabla_N (J \os f), \nabla_N g \> | \le \| \nabla_N (J \os f)\|_2\cdot\|\nabla_N g\|_2.
\end{equation}
An application of the definitions of discrete gradient and convolution gives us
\begin{equation}
( \nabla_N (J \os f) )_{ij} = h_xh_y\sum_{(m,n)\in S_h}(\nabla_N J)_{i-m,j-n}f_{mn},\qquad(i,j)\in S_h.  \label{lemma 2.2-1}
\end{equation}
Then,
\begin{align*}
\| \nabla_N (J \os f) \|_2^2
& = h_xh_y\sum_{(i,j)\in S_h} \Big(h_xh_y\sum_{(m,n)\in S_h}(\nabla_N J)_{i-m,j-n}f_{mn}\Big)^2 \\
& \le |\Omega|\|\nabla_N J\|_\infty^2 \Big(h_xh_y\sum_{(m,n)\in S_h}f_{mn}\Big)^2 \\
& \le |\Omega|^2\|\nabla_N J\|_\infty^2 \|f\|_2^2.
\end{align*}
The smoothness of $\mathsf{J}$ implies the bound $\|\nabla_NJ\|_\infty\le C_{\mathsf{J}}$.
Then, we arrive
\begin{equation}
|\<J\os f, \Delta_Ng\>|\le C_{\mathsf{J}}|\Omega|\|f\|_2\|\nabla_Ng\|_2
\le\alpha\|f\|_2^2 + \frac{C}{\alpha}\|\nabla_Ng\|_2^2,
\end{equation}
for any $\alpha>0$, where $C=\frac{1}{4}C_{\mathsf{J}}^2|\Omega|^2$.
\end{proof}

\begin{remark}
We make a technical assumption $\mathsf{J} \in \Cp^1(\Omega)$ to facilitate the analysis. On the other hand, a singular $\mathsf{J}$ may also be of certain scientific interests in many relevant physical models. However, a direct application of the above analysis is not available for a singular $\mathsf{J}$, since a point-wise bound of $\nabla_N J$ is not available in~\eqref{lemma 2.2-1} any more. A non-standard extension to the case with a singular $\mathsf{J}$ will be considered in our future works.
\end{remark}

Given an integrable kernel $J$ satisfying the assumptions (a)--(d),
we can define the discrete version of the nonlocal operator $\hL$ by
\begin{equation}
\hL_Nf=(J\os1)f-J\os f,\qquad\forall\,f\in\Mh.
\end{equation}
It is easy to check that $\hL_N$ commutes with $\Delta_N$ and is self-adjoint and positive semi-definite.
Finally, the discrete version of the energy \eqref{nonlocalenergy2} is defined as
\begin{equation}
E_N(v)=\<F(v),1\>+\frac{\eps^2}{2}\<\hL_Nv,v\>,\quad v\in\Mh.
\end{equation}

\subsection{Fourier projection of the exact solution}

The existence and uniqueness of a smooth periodic solution to the IPDE \eqref{nonlocalCH} with smooth periodic initial data
may be established by using techniques developed by Bates and Han in \cite{bates05b, bates05a}.
In this article, we denote this IPDE solution by $U$.
Motivated by these results, one obtains
\begin{equation}
\nrm{U}_{L^\infty(0,T;L^\infty)} + \nrm{ \nabla U}_{L^\infty(0,T;L^\infty)} < C  ,
\end{equation}
for any $T>0$.

Define $U_N (\, \cdot \, ,t) := {\cal P}_N U (\, \cdot \, ,t)$,
the (spatial) Fourier projection of the exact solution into ${\cal B}^N$,
the space of trigonometric polynomials of degree up to $N$.
The following projection approximation is standard:
if $U \in L^\infty(0,T;\Hp^\ell)$, for some $\ell\in\mathbb{N}$,
\begin{equation}
\| U_N - U \|_{L^\infty(0,T;H^k)}
\le C h^{\ell-k} \| U \|_{L^\infty(0,T;H^\ell)},  \, \, \,
h = \max \{ h_x, h_y \} ,  \quad \forall \ 0 \le k \le \ell .
\end{equation}
By $U_N^m$,  $U^m$ we denote $U_N(\, \cdot \, , t_m)$ and $U(\, \cdot \, , t_m)$, respectively, with $t_m = m\dt$.
It is clear that $\int_\Omega \, U_N ( \cdot, t_m) \, d {\bf x} = \int_\Omega \, U ( \cdot, t_m) \, d {\bf x}$, for any $m \in\mathbb{N}$, due to the fact that $U_N$ is the Fourier projection of $U$, and thus,
\begin{align}
\int_\Omega \, U_N ( \cdot, t_m) \, d {\bf x}
= \int_\Omega \, U ( \cdot, t_m) \, d {\bf x}
= \int_\Omega \, U ( \cdot, t_{m-1}) \, d {\bf x}
  =  \int_\Omega \, U_N ( \cdot, t_{m-1}) \, d {\bf x},
\quad \forall \ m \in\mathbb{N} ,
\label{mass conserv-1}
\end{align}
in which the second step is based on the fact that the exact solution $U$ is mass conservative at the continuous level.
On the other hand, the solution of the numerical scheme~\eqref{scheme-1st-1} is also mass conservative at the discrete level:
\begin{equation}
\overline{u^m} = \overline{u^{m-1}} ,  \quad \forall \ m \in \mathbb{N} .
\end{equation}
Meanwhile, we denote by $u_N^m$ the values of $U_N$ at discrete grid points at time instant $t_m$, i.e.,
$u_N^m := {\mathcal P}_h U_N (\, \cdot \, , t_m)$.
Since $U_N \in {\cal B}^N$, it always holds
\[
\int_\Omega U_N(\, \cdot \, ,t_m)  \, d {\bf x}
= h_x h_y \sum_{(i,j)\in S_h} U_N(x_i,y_j,t_m)
= h_x h_y \sum_{(i,j)\in S_h} (u_N^m)_{i,j},
\]
so the mass conservative property is available at the discrete level: $\overline{u_N^m} = \overline{u_N^{m-1}}$.
As indicated before, we use the mass conservative projection for the initial data:
$u^0 = u_N^0 = {\mathcal P}_h U_N (\, \cdot \, , t=0)$, that is
\begin{equation}
u^0_{i,j} := U_N (x_i, y_j, t=0) ,
\label{initial data-0}
\end{equation}	
The error grid function is defined as
\begin{equation}
e^m := u_N^m - u^m ,  \quad \forall \ m \ge 0 .
\end{equation}
Therefore, it follows that
\begin{equation}
  \overline{e^m} =0 ,  \quad
  \mbox{since $\overline{u_N^m} = \overline{u_N^0} = \overline{u^0} = \overline{u^m}$},  \, \, \, \forall \, m \ge 0 ,
   \label{error function-zero mean-1}
\end{equation}
so that the discrete norm $\nrm{ \, \cdot \, }_{-1,N}$ is well defined for the error grid function. We also notice that the Fourier projection of the exact solution has to be taken at the initial time step as~\eqref{initial data-0}, instead of a pointwise interpolation of the exact initial data, to assure the zero-mean property of the numerical error grid function at a discrete level. In addition, we have (with $N_k:= [ \frac{T}{\dt} ]$ denoting the integer part of $\frac{T}{\dt}$),
\begin{equation}
\max_{1\le k \le N_k} \| u_N^k\|_\infty + \max_{1\le k \le N_k}\| \nabla_N u_N^k\|_\infty < C .
\end{equation}

\section{Convergence analysis and energy stability analysis}

We begin this section by stating the main result on the convergence analysis of the stabilized linear scheme \eqref{scheme-1st-1}.
The detailed proof is given in the following two subsections,
including the higher order consistency analysis and the convergence analysis.
The energy stability of \eqref{scheme-1st-1} is then obtained
under some new assumptions on $A$, instead of \eqref{condition-A-0} given in \cite{du18}. With an initial data with sufficient regularity, we could assume that the exact solution has regularity of class $\mathcal{R}$:
\begin{equation}
U \in \mathcal{R} := H^4 (0,T; C_{\rm per}^0) \cap H^3 (0,T; C_{\rm per}^2) \cap L^\infty (0,T; C_{\rm per}^{m+2})  ,  \quad m \ge 3 . 			 
\end{equation}

\begin{theorem}
\label{thm:convergence-Hm1}
Given periodic initial data $U(x,y,t=0) \in C_{\rm per}^{m+2}(\Omega)$.
Suppose the unique periodic solution for the IPDE \eqref{nonlocalCH},
given by $U(x,y,t)$ on $\Omega\times(0,T]$ for some $T< \infty$, is of regularity class $\mathcal{R}$. In addition, the following assumption is made for the constant $A$:
\begin{equation}
\label{condition-A-1}
A \ge \frac{18 M_0^4}{\gamma_0} ,  \quad \text{with} \
M_0 =  1 + \max_{1\le k \le N_k} \| u_N^k \|_\infty .
\end{equation}
Then, provided $\dt$ and $h$ are sufficiently small, under linear refinement path constraint $\dt \le C h$, with $C$ any fixed constant, we have
\begin{equation}
\label{convergence-0}
\| e^n \|_{-1, N} + \Bigl( \gamma_0 \dt \sum_{k=1}^n \| e^k \|_2^2  \Bigr)^{\frac12} \le C (\dt + h^m ) ,
\end{equation}
for all positive integers $n$, such that $n \dt \le T$, where $C>0$ is independent of $h$ and $\dt$.
\end{theorem}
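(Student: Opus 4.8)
The plan is to treat the numerical solution as a perturbation of a suitably modified version of the exact solution and to run a time-stepping bootstrap. A direct energy estimate does not suffice: the energy stability of \cite{du18} already presupposes a uniform $\ell^\infty$ bound on $u^n$, and a raw discrete $H^{-1}$ error bound of size $O(\dt+h^m)$ is far too weak to recover an $\ell^\infty$ bound through an inverse inequality, since in two dimensions one only has $\|v\|_\infty\le Ch^{-2}\nrm{v}_{-1,N}$ for $v\in\mathcal{B}^N$. Therefore I would first manufacture an approximate solution that satisfies \eqref{scheme-1st-1} with a residual of order $\dt^3+h^m$, and then carry the error analysis for this approximate solution. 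The discrete $H^{-1}$ norm is the right gauge because the difference of cubics equals $\mathcal{Q}^n\cdot(\text{error})$ with $\mathcal{Q}^n$ a bounded, sign-definite coefficient, so pairing with $(-\Delta_N)^{-1}$ of the error only costs $\|\mathcal{Q}^n\|_\infty$ rather than $\|\nabla_N\mathcal{Q}^n\|_\infty$.

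\emph{Step 1 (higher order consistency expansion).} Starting from the Fourier projection $U_N$, construct two correction fields $\mathcal{U}_1,\mathcal{U}_2$ — smooth, $\Omega$-periodic, mean-zero, vanishing at $t=0$, and independent of $\dt,h$ — as solutions of linearized nonlocal Cahn--Hilliard equations whose forcings are the successive Taylor coefficients produced by expanding \eqref{scheme-1st-1} about $U_N$ in powers of $\dt$; their existence and regularity follow from the hypothesized class $\mathcal{R}$ together with the linear well-posedness theory in the spirit of Bates--Han. Setting $\widehat U:=U_N+\dt\,\mathcal{U}_1+\dt^2\,\mathcal{U}_2$ and restricting to the grid, one verifies by Taylor expansion — using that spectral differentiation is exact on $\mathcal{B}^N$, the spectral projection estimate for $U_N-U$, the boundedness of $\hL_N$ (so the nonlocal term costs no spatial accuracy), and the regularity of the cubic nonlinearity — that
\[
\frac{\widehat U^{n+1}-\widehat U^n}{\dt}=\Delta_N\bigl[(\widehat U^n)^3-\widehat U^n+A(\widehat U^{n+1}-\widehat U^n)+\eps^2\hL_N\widehat U^{n+1}\bigr]+\tau^{n+1},\qquad\nrm{\tau^{n+1}}_{-1,N}\le C(\dt^3+h^m).
\]
Moreover $\widehat U$ is mass conservative and, since it differs from $u_N^n$ by $O(\dt)$, for $\dt$ small one has $\|\widehat U^n\|_\infty\le M_0$ and $\|\nabla_N\widehat U^n\|_\infty\le C$.

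\emph{Step 2 (induction and the $H^{-1}$ estimate).} Define $\widehat e^n:=\widehat U^n-u^n$; by Step 1, $\widehat e^0=0$ and $\overline{\widehat e^n}=0$. I would argue by induction on $n$ with inductive hypothesis $\|\widehat e^k\|_\infty\le\tfrac12$ for $k\le n$, which yields $\|u^k\|_\infty\le M_0$ and hence $0\le\mathcal{Q}^k:=(\widehat U^k)^2+\widehat U^k u^k+(u^k)^2\le 3M_0^2$. Subtracting \eqref{scheme-1st-1} from the perturbed identity gives a \emph{linear} equation for $\widehat e$; testing it with $2\dt(-\Delta_N)^{-1}\widehat e^{k+1}$ and using Lemma \ref{lem_disoperator}, the time-difference term telescopes into $\nrm{\widehat e^{k+1}}_{-1,N}^2-\nrm{\widehat e^k}_{-1,N}^2$; the monotonicity $\mathcal{Q}^k\ge 0$ is exploited after writing $\widehat e^{k+1}=\widehat e^k+(\widehat e^{k+1}-\widehat e^k)$, the resulting cross term being absorbed by the stabilization's $\|\widehat e^{k+1}-\widehat e^k\|_2^2$ contribution, where the choice $A\ge 18M_0^4/\gamma_0$ is tuned precisely so that the leftover is dominated by $\gamma_0\dt\|\widehat e^k\|_2^2$; rewriting the explicit term $-\widehat e^k$ as $-\widehat e^{k+1}+(\widehat e^{k+1}-\widehat e^k)$ and combining it with the implicit nonlocal term, together with $\eps^2\hL_N\ge 0$, extracts the dissipation $\gamma_0\dt\|\widehat e^{k+1}\|_2^2$ (the discrete counterpart of the diffusive mobility bound $a(u)=3u^2-1+\eps^2J*1\ge\gamma_0$); and the residual is controlled by $2\dt|\langle\tau^{k+1},(-\Delta_N)^{-1}\widehat e^{k+1}\rangle|\le\dt\nrm{\tau^{k+1}}_{-1,N}^2+\dt\nrm{\widehat e^{k+1}}_{-1,N}^2$. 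Summing over $k$ and invoking the discrete Gronwall inequality (for $\dt$ small) yields, for all $n$ with $n\dt\le T$,
\[
\nrm{\widehat e^n}_{-1,N}^2+\gamma_0\dt\sum_{k=1}^n\|\widehat e^k\|_2^2\le C\,\dt\sum_{k=1}^n\nrm{\tau^k}_{-1,N}^2\le C(\dt^3+h^m)^2,
\]
with $C$ independent of $\dt,h,n$. The 2D inverse inequality $\|\widehat e^{n+1}\|_\infty\le Ch^{-2}\nrm{\widehat e^{n+1}}_{-1,N}\le Ch^{-2}(\dt^3+h^m)$, together with the refinement path $\dt\le Ch$ and $m\ge 3$, gives $\|\widehat e^{n+1}\|_\infty\le C(h+h^{m-2})\le\tfrac12$ for $h$ small, which closes the induction and in particular establishes $\|u^n\|_\infty\le M_0$ for all $n\dt\le T$; with this bound in hand, \eqref{condition-A-1} implies \eqref{condition-A-0}, so the energy stability of \cite{du18} is in force. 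Finally, since $u_N^n-\widehat U^n=-\dt\,\mathcal{U}_1^n-\dt^2\,\mathcal{U}_2^n$ has discrete $H^{-1}$ norm and discrete-in-time $\ell^2(L^2)$ norm of order $\dt$, the decomposition $e^n=(u_N^n-\widehat U^n)+\widehat e^n$ and the triangle inequality upgrade the above to \eqref{convergence-0}.

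\emph{Main obstacle.} The crux is Step 1 together with the order requirement: one must set up the correction equations, verify their well-posedness and regularity from $\mathcal{R}$, and carry out the Taylor bookkeeping — including the nonlinear cross terms between $U_N$ and the $O(\dt)$ corrections, and the nonlocal operator — to certify $\nrm{\tau^{n+1}}_{-1,N}=O(\dt^3+h^m)$; two corrections are genuinely needed, since only $h^{-2}\dt^3\to0$ makes the inverse-inequality bootstrap in Step 2 close. A secondary delicate point is the exact arithmetic of the $H^{-1}$ estimate, where the contribution of the explicitly treated nonlinear term must be absorbed by the stabilization precisely under $A\ge 18M_0^4/\gamma_0$ while still leaving the dissipation $\gamma_0\dt\sum_k\|\widehat e^k\|_2^2$ on the left-hand side.
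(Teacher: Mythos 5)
Your proposal is correct and follows essentially the same route as the paper: the same two-term correction expansion $\hat{U}=U_N+\dt\,{\cal P}_N U^1_{\dt}+\dt^2{\cal P}_N U^2_{\dt}$ yielding an $O(\dt^3+h^m)$ residual in $\|\cdot\|_{-1,N}$, the same induction on $\|\hat e^k\|_\infty\le\tfrac12$ with the test function $2(-\Delta_N)^{-1}\hat e^{n+1}$, the same sign argument for the cubic difference plus absorption of the cross term by the stabilization under $A\ge 18M_0^4/\gamma_0$, Gronwall, the $h^{-2}$ inverse inequality with $\dt\le Ch$ and $m\ge 3$ to close the loop, and the final triangle inequality back to $e^n$. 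The only cosmetic difference is how you extract the $\gamma_0\dt\|\hat e^{n+1}\|_2^2$ dissipation (regrouping the explicit $-\hat e^n$ term with the implicit nonlocal term, versus the paper's direct Cauchy estimate and coefficient bookkeeping via Lemma \ref{lem:1}), which leads to the same conclusion.
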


\subsection{Higher order consistency analysis}


By consistency, the Fourier projection solution $U_N$ solves the discrete equation with an $O(\dt + h^m)$ accuracy:
\begin{equation}
\frac{U_N^{n+1} - U_N^n}{\dt} = \Delta_N \Bigl( (U_N^n)^3 - U_N^n + A (U_N^{n+1} - U_N^n) + \eps^2 {\cal L}_N U_N^{n+1} \Bigr) + \tau_0^{n+1}  ,
\end{equation}
where the local truncation error $\tau_0^{n+1}$ satisfies
\begin{equation}
\| \tau_0^{n+1} \|_{-1,N} \le C (\dt + h^m)  .   \label{truncation-error-2}
\end{equation}
In addition, the discrete zero-mean property of $\tau_0^{n+1}$ is observed, which will be useful in later analysis:
\begin{equation}
  \overline{\tau_0^{n+1}} = 0 ,  \quad \mbox{since} \, \, \,
  \overline{U_N^{n+1}} = \overline{U_N^n},  \, \,
  \int_\Omega \, \Delta_N \Bigl( (U_N^n)^3 - U_N^n + A (U_N^{n+1} - U_N^n) + \eps^2 {\cal L}_N U_N^{n+1} \Bigr) \, d \bx = 0 .   \label{truncation-error-zero mean}
\end{equation}
Notice that the first identity is derived in~\eqref{mass conserv-1},
while the second one comes from the periodic boundary condition.
However, this local truncation error will not be enough
to recover the $\| \cdot \|_\infty$ bound of the numerical solution due to the first order accuracy in time.
To remedy this, we have to construct supplementary fields, $U^1_{\dt}$, $U^2_{\dt}$, and denote
\begin{equation}
\label{consistency-1}
\hat{U} = U_N + \dt {\cal P}_N U^1_{\dt} + \dt^2 {\cal P}_N U^2_{\dt}  .
\end{equation}
We also notice that both $U^1_{\dt}$, $U^2_{\dt}$ are (spatially) continuous functions, and their construction will be outlined later.
Moreover, a higher $O (\dt^3 + h^m)$ consistency has to be satisfied with the given numerical scheme \eqref{scheme-1st-1}.
The constructed fields $U^1_{\dt}$, $U^2_{\dt}$, which will be obtained using a perturbation expansion, will depend solely on the exact solution $U$.

In other words, we introduce a higher order approximate expansion  of the exact solution, since a first order temporal consistency estimate~\eqref{truncation-error-2} is not able to control the discrete $\ell^\infty$ norm of the numerical solution.
Instead of substituting the exact solution into the numerical scheme, a careful construction of an approximate profile is performed by adding $O (\dt)$ and $O (\dt^2)$ correction terms to the exact solution to satisfy an $O (\dt^3)$ truncation error. In turn, we estimate the numerical error function between the constructed profile and the numerical solution, instead of a direct comparison between the numerical solution and exact solution. Such a higher order consistency enables us to derive a higher order convergence estimate in the $\| \cdot \|_{-1,N}$ norm, which in turn leads to a desired $\| \cdot \|_\infty$ bound of the numerical solution, via an application of inverse inequality.   This approach has been reported for a wide class of nonlinear PDEs; see the related works for the incompressible fluid equation~\cite{E95, E02, STWW03, STWW07, WL00, WL02, WLJ04}, various gradient equations~\cite{baskaran13b, guan17a, guan14a}, the porous medium equation based on the energetic variational approach~\cite{duan19c}, nonlinear wave equation~\cite{WangL15}, etc.

We begin with an application of the temporal discretization in the numerical scheme \eqref{scheme-1st-1} for the Fourier projection solution $U_N$:
\begin{equation}
\label{consistency-2-1}
\frac{U_N^{n+1} - U_N^n}{\dt}  = \Delta \Bigl( (U_N^n)^3 - U_N^n + A (U_N^{n+1} - U_N^n)
+ \eps^2 {\cal L} U_N^{n+1} \Bigr) + \dt \g^{(1)} + O (\dt^2) + O (h^m) ,
\end{equation}
which comes from the Taylor expansion in time. In more details, the function $\g^{(1)}$ is smooth enough and only depends on the higher order derivatives of $U_N$. In particular, by making use of similar arguments as in the derivation of ~\eqref{truncation-error-zero mean}, we conclude that
\begin{equation}
   \int_\Omega \, ( U_N^{n+1} - U_N^n ) \, d \bx = 0 ,  \, \, \,
    \int_\Omega \, \Delta \Bigl( (U_N^n)^3 - U_N^n + A (U_N^{n+1} - U_N^n)
+ \eps^2 {\cal L} U_N^{n+1} \Bigr)  \, d \bx = 0 .
   \label{consistency-2-1-2}
\end{equation}
This in turn indicates that
\begin{equation}
   \int_\Omega \, \g^{(1)} \, d \bx = 0 .
   \label{consistency-2-1-3}
\end{equation}
The first order temporal correction function $U^1_{\dt}$ is given by the solution of the following linear differential equation
\begin{align}
    &
\partial_t U^1_{\dt}  = \Delta  \Bigl(  3 U_N^2 U^1_{\dt} - U^1_{\dt}
 + \eps^2 {\cal L} U^1_{\dt} \Bigr) - \g^{(1)}  ,   \label{consistency-2-2}
\\
  &
  U^1_{\dt} ( \cdot , \, t=0) \equiv 0  ,  \label{consistency-2-2-b}
\end{align}
with the periodic boundary condition.
In fact, \eqref{consistency-2-2} is a linear parabolic PDE, with a sufficiently regular coefficient function $3 U_N^2$ (regularity dependent on $U \in {\cal R}$). The existence and uniqueness of its solution could be derived by making use of a standard Galerkin procedure and Sobolev estimates, following the classical techniques for time-dependent parabolic equation~\cite{temam01}. Such a solution depends solely on the profile $U_N$ and is regular enough.
Similar to \eqref{consistency-2-1}, an application of the temporal discretization to $U^1_{\dt}$ indicates that
\begin{align}
\frac{ (U^1_{\dt})^{n+1} - ( U^1_{\dt} )^n}{\dt}
& = \Delta \Bigl( 3 ( U_N^n )^2 (U^1_{\dt})^n  - (U^1_{\dt})^n + A ( (U^1_{\dt})^{n+1} - (U^1_{\dt})^n) \nonumber\\
& \qquad   + \eps^2 {\cal L} ( U^1_{\dt} )^{n+1}  \Bigr)    - ( \g^{(1)} )^n + O (\dt) . \label{consistency-2-3}
\end{align}
In turn, we denote $\hat{U}^{(1)} = U_N + \dt {\cal P}_N U^1_{\dt}$. 
A combination of \eqref{consistency-2-1} and a Fourier projection of \eqref{consistency-2-3} results in the following higher order consistency estimate:
\begin{align}
\frac{ (\hat{U}^{(1)} )^{n+1} - ( \hat{U}^{(1)} )^n}{\dt}
& = \Delta \Bigl( ( ( \hat{U}^{(1)} )^n )^3 - (\hat{U}^{(1)} )^n + A ( (\hat{U}^{(1)} )^{n+1}  - (\hat{U}^{(1)} )^n)  \nonumber\\
& \qquad   + \eps^2 {\cal L} ( \hat{U}^{(1)} )^{n+1}  \Bigr) + \dt^2 \g^{(2)} + O (\dt^3) + O (h^m), \label{consistency-2-4}
\end{align}
in which we have made use of the following estimate
\begin{align}
  (\hat{U}^{(1)} )^3 = ( U_N + \dt {\cal P}_N U^1_{\dt} )^3
  &= U_N^3  + 3 \dt U_N^2 {\cal P}_N U^1_{\dt} + O (\dt^2)  \nonumber
\\
  &=
  U_N^3  + 3 \dt {\cal P}_N ( U_N^2 {\cal P}_N U^1_{\dt} ) + O (\dt^2)
  + O (h^m) .
\end{align}
Again, $\g^{(2)}$ is smooth enough and only dependent on the higher order derivatives of $U_N$.

In addition, we observe that the constructed profile $U^1_{\dt}$ has zero-mean at the continuous level, based on the equation~\eqref{consistency-2-2}-\eqref{consistency-2-2-b}, combined with the fact~\eqref{consistency-2-1-3}:
\begin{equation}
\int_\Omega \, \partial_t U^1_{\dt}  \, d \bx = \int_\Omega \, \Delta  \Bigl(  3 U_N^2 U^1_{\dt} - U^1_{\dt}+ \eps^2 {\cal L} U^1_{\dt} \Bigr) \, d \bx
 - \int_\Omega \, \g^{(1)} \, d \bx  = 0 ,   \label{consistency-2-5-b}
\\
\end{equation}
so that
\begin{equation}
  \int_\Omega \, U^1_{\dt} ( \cdot , \, t) \, d \bx
  = \int_\Omega \, U^1_{\dt} ( \cdot , \, t=0) \, d \bx
  = 0 ,  \, \, \, \forall \, t > 0 .  
\end{equation}
In turn, its projection also has a zero-mean:
\begin{equation}
  \int_\Omega \, {\cal P}_N U^1_{\dt} ( \cdot , \, t) \, d \bx
  = 0 ,  \, \, \, \forall \, t > 0 .  
\end{equation}
Therefore, we conclude that $\hat{U}^{(1)}$ has the same average as $U_N$ at the continuous level:
\begin{equation}
  \int_\Omega \, \hat{U}^{(1)} ( \cdot , \, t) \, d \bx
  = \int_\Omega \, U_N ( \cdot , \, t) \, d \bx ,
  \, \, \, \forall \, t > 0 .   
\end{equation}
Since $U_N$ is mass conservative at the continuous level, as indicated by~\eqref{mass conserv-1}, we arrive at a similar property for $\hat{U}^{(1)}$:
\begin{equation}
  \int_\Omega \, (\hat{U}^{(1)} )^{n+1} \, d \bx
  = \int_\Omega \, (\hat{U}^{(1)} )^n \, d \bx ,
  \, \, \, \forall  n \in\mathbb{N}  .    \label{consistency-2-5-f}
\end{equation}
As a consequence, by making use of similar arguments as in~\eqref{consistency-2-1-2}-\eqref{consistency-2-1-3}, we see that $\g^{(2)}$ has zero-mean at the continuous level:
\begin{equation}
   \int_\Omega \, \g^{(2)} \, d \bx = 0 .
\end{equation}

The second order temporal correction function $U^2_{\dt}$ could be constructed in a similar manner, and it turns out to be the solution of the following linear differential equation
\begin{align}
  &
\partial_t U^2_{\dt} = \Delta  \Bigl(  3 U_N^2 U^2_{\dt} - U^2_{\dt} + \eps^2 {\cal L} U^2_{\dt} \Bigr) - \g^{(2)}  ,  \label{consistency-2-6}
\\
  &
  U^2_{\dt} ( \cdot , \, t=0) \equiv 0  , 
\end{align}
with the periodic boundary condition.
Similarly, \eqref{consistency-2-6} is a linear parabolic PDE, with a sufficiently regular coefficient function $3 U_N^2$, and its unique solution depends solely on the profile $U_N$ and is smooth enough.
An application of the temporal discretization to $U^2_{\dt}$ gives
\begin{align}
\frac{ (U^2_{\dt})^{n+1} - ( U^2_{\dt} )^n}{\dt}
& = \Delta \Bigl( 3 (U_N^n)^2 (U^2_{\dt})^n - (U^2_{\dt})^n + A ( (U^2_{\dt})^{n+1} - (U^2_{\dt})^n)  \nonumber\\
& \qquad  + \eps^2 {\cal L} ( U^2_{\dt} )^{n+1}  \Bigr) - ( \g^{(2)} )^n + O (\dt) . \label{consistency-2-7}
\end{align}
Notice that $\hat{U} = \hat{U}^{(1)} + \dt^2 {\cal P}_N U^2_{\dt}$.
In turn, a combination of \eqref{consistency-2-4} and \eqref{consistency-2-7}
leads to the desired third order consistency estimate in time:
\begin{equation}
\frac{ \hat{U}^{n+1} - \hat{U}^n}{\dt}
 = \Delta \Bigl( ( \hat{U}^n )^3 - \hat{U}^n + A ( \hat{U}^{n+1} - \hat{U}^n) + \eps^2 {\cal L} \hat{U}^{n+1} \Bigr)
  + O (\dt^3) + O (h^m),
\end{equation}
in which we have made use of the following estimate
\begin{align}
\hat{U}^3 = ( \hat{U}^{(1)} + \dt^2 {\cal P}_N U^2_{\dt} )^3
 &= (\hat{U}^{(1)} )^3  + 3 \dt^2 U_N^2 {\cal P}_N U^2_{\dt}  + O (\dt^3)   \nonumber
\\
  &=
  (\hat{U}^{(1)} )^3  + 3 \dt^2 {\cal P}_N ( U_N^2 {\cal P}_N U^2_{\dt} )
 + O (\dt^3) + O (h^m) .
\end{align}

Similar to the analyses in~\eqref{consistency-2-5-b}-\eqref{consistency-2-5-f},
we are able to prove that $\hat{U}$ has the same average as $U_N$ at the continuous level:
\begin{equation}
\int_\Omega \, \partial_t U^2_{\dt}  \, d \bx = \int_\Omega \, \Delta  \Bigl(  3 U_N^2 U^2_{\dt} - U^2_{\dt}+ \eps^2 {\cal L} U^2_{\dt} \Bigr) \, d \bx
 - \int_\Omega \, \g^{(2)} \, d \bx  = 0 , 
\end{equation}
so that
\begin{eqnarray}
  &&
  \int_\Omega \, U^2_{\dt} ( \cdot , \, t) \, d \bx
  = \int_\Omega \, U^2_{\dt} ( \cdot , \, t=0) \, d \bx
  = 0 ,  \, \, \, \forall \, t > 0 ,  
\\
  &&
  \int_\Omega \, {\cal P}_N U^2_{\dt} ( \cdot , \, t) \, d \bx
  = 0 ,  \, \, \, \forall \, t > 0 .  
\\
  &&
    \int_\Omega \, \hat{U} ( \cdot , \, t) \, d \bx
  = \int_\Omega \, \hat{U}^{(1)} ( \cdot , \, t) \, d \bx
  = \int_\Omega \, U_N ( \cdot , \, t) \, d \bx ,
  \, \, \, \forall \, t > 0 .    \label{consistency-2-9-e}
\\
  &&
  \int_\Omega \, \hat{U}^{n+1} \, d \bx
  = \int_\Omega \, \hat{U}^n \, d \bx ,
  \, \, \, \forall  n \in\mathbb{N}  .    \label{consistency-2-9-f}
\end{eqnarray}

Finally, with an application of Fourier pseudo-spectral approximation in space,
we obtain the $O (\dt^3 + h^m)$ truncation error estimate for the constructed solution $\hat{U}$:
\begin{eqnarray}
  &&
\frac{ \hat{U}^{n+1} - \hat{U}^n}{\dt}
= \Delta_N \Bigl( ( \hat{U}^n )^3 - \hat{U}^n + A ( \hat{U}^{n+1} - \hat{U}^n) + \eps^2 {\cal L}_N \hat{U}^{n+1} \Bigr) + \tau_2^{n+1} ,  \label{consistency-3-1}
\\
  &&
  \mbox{with} \quad \| \tau_2^{n+1} \|_{-1,N} \le C (\dt^3 + h^m)  .
\end{eqnarray}
We notice that $\tau_2^{n+1}$ has zero-mean at a discrete level, $\overline{\tau_2^{n+1} } =0$, for any $n \in\mathbb{N}$, based on the estimate~\eqref{consistency-2-9-f}, combined with the fact that $\hat{U}^k \in {\cal B}^N$.
	
As stated earlier, the purpose of the higher order expansion \eqref{consistency-1} is
to obtain an $\ell^{\infty}$ bound of the error function via its $\| \cdot \|_{-1,N}$ norm in higher order accuracy
by utilizing an inverse inequality in spatial discretization, which will be shown below.
A detailed analysis shows that
\begin{equation}
\|  \hat{U} - U_N \|_\infty \le C  \dt   ,
\end{equation}
since $\| U^1_{\dt} \|_\infty, \| U^2_{\dt} \|_\infty \le C$.
In particular, the following bound becomes available:
\begin{equation}
\label{consistency-4-2}
\| \hat{U} - U_N \|_\infty \le C  \dt   \le \frac12 ,
\end{equation}
provided that $\dt$ is sufficiently small, so that
\begin{equation}
\label{consistency-4-3}
\| \hat{U} \|_\infty \le \| U_N \|_\infty + \| \hat{U} - U_N \|_\infty
\le \| U_N \|_\infty + \frac12 .
\end{equation}

\subsection{Convergence analysis in the $\ell^\infty (0,T; H_h^{-1}) \cap \ell^2 (0,T; \ell^2 )$ norm}

Instead of a direct comparison between the numerical solution and the Fourier projection $U_N$ of the exact solution,
we estimate the error between the numerical solution and the constructed solution
to obtain a higher order convergence in $\| \cdot \|_{-1, N}$ norm.
In turn, the following error function is introduced:
\begin{equation}
\hat{e}^k := \hat{U}^k - u^k .
\end{equation}
In particular, the established consistency estimate~\eqref{consistency-2-9-e} indicates that
\begin{equation}
   \overline{\hat{U}^k} = \frac{1}{| \Omega |} \int_\Omega \, \hat{U}^k \, d \bx
   = \frac{1}{| \Omega |} \int_\Omega \, U_N^k \, d \bx  ,
   \quad \forall   k \in\mathbb{N} ,  
\end{equation}
in which the first step is based on the fact that $\hat{U}^k \in {\cal B}^N$. Its combination with~\eqref{error function-zero mean-1} results in the discrete zero-mean property of the numerical error function $\hat{e}^k$:
\begin{equation}
  \overline{\hat{e}^k} =0 ,  \quad \mbox{since $\overline{\hat{U}^k} = \overline{U_N^k} = \overline{U_N^0} = \overline{u^0} = \overline{u^k}$},  \, \, \, \forall \, k \ge 0 .
\end{equation}
In turn, the discrete $\| \cdot \|_{-1,N}$ norm of this error function is well defined.

Subtracting~\eqref{scheme-1st-1} from~\eqref{consistency-3-1} yields
\begin{eqnarray}
\label{convergence-1}
 \frac{ \hat{e}^{n+1} - \hat{e}^n}{\dt}
 = \Delta_N \Bigl(  ( \hat{U}^n )^3 - (u^n)^3 - \hat{e}^n + A ( \hat{e}^{n+1} - \hat{e}^n)  + \varepsilon^2 {\cal L}_N \hat{e}^{n+1} \Bigr) + \tau_2^{n+1} .
\end{eqnarray}
To carry out the nonlinear error estimate,
we make an $\| \cdot \|_\infty$ assumption for the numerical error function at the previous time step $t_n$:
\begin{equation}
\label{a priori-1}
\| \hat{e}^n \|_\infty \le \frac12  .
\end{equation}
In turn, the $\| \cdot \|_\infty$ bound for the numerical solution at $t^n$ becomes available
\begin{equation}
\label{a priori-2}
  \| u^n \|_\infty = \| \hat{U}^n - \hat{e}^n \|_\infty
  \le \| \hat{U}^n \|_\infty + \| \hat{e}^n \|_\infty
  \le \| U_N^n \|_\infty + \frac12 + \frac12 \le M_0 ,
\end{equation}
in which the estimate \eqref{consistency-4-2} for $\| \hat{U}^n \|_\infty$ has been recalled in the third step.
The \emph{a priori} assumption \eqref{a priori-1} will be recovered in the convergence estimate at the next time step,
as will be demonstrated later.

Since $\overline{\hat{e}^k}=0$ for any $k \ge 0$, $(-\Delta_N)^{-1} \hat{e}^k$ has been well-defined.
Taking a discrete inner product with \eqref{convergence-1} by $2 (-\Delta_N)^{-1} \hat{e}^{n+1}$ leads to
\begin{align}
& \quad~
 \| \hat{e}^{n+1} \|_{-1,N}^2 - \| \hat{e}^n \|_{-1,N}^2
 +  \| \hat{e}^{n+1} - \hat{e}^n \|_{-1,N}^2
 + 2A \dt \langle \hat{e}^{n+1} - \hat{e}^n, \hat{e}^{n+1} \rangle  \nonumber
\\
  &=
    - 2 \dt \langle ( \hat{U}^n )^3 - (u^n)^3 , \hat{e}^{n+1} \rangle
  + 2 \dt \langle \hat{e}^n , \hat{e}^{n+1} \rangle
   - 2 \varepsilon^2 \dt \langle {\cal L}_N \hat{e}^{n+1} , \hat{e}^{n+1} \rangle  \nonumber
 \\
  &\qquad
  + 2 \dt \langle (-\Delta_N)^{-1} \hat{e}^{n+1} , \tau_2^{n+1} \rangle ,
  \label{convergence-2}
\end{align}
in which summation by parts formulas have been repeatedly applied.
For the left hand side term associated with the artificial regularization, the following identity is valid:
\begin{equation}
\label{convergence-3}
  2A  \langle \hat{e}^{n+1} - \hat{e}^n, \hat{e}^{n+1} \rangle
  = A  ( \| \hat{e}^{n+1} \|_2^2 - \| \hat{e}^n \|_2^2
   + \| \hat{e}^{n+1} - \hat{e}^n \|_2^2 ) .
\end{equation}
The right hand side term associated with the truncation error could be bounded in a straightforward way:
\begin{equation}
  2 \langle (-\Delta_N)^{-1} \hat{e}^{n+1} , \tau_2^{n+1} \rangle
  \le 2 \| \hat{e}^{n+1} \|_{-1,N} \cdot \| \tau_2^{n+1} \|_{-1,N}
  \le  \| \hat{e}^{n+1} \|_{-1,N}^2 + \| \tau_2^{n+1} \|_{-1,N}^2  .
\end{equation}
For the second linear term on the right hand side, a direct application of Cauchy inequality gives
\begin{equation}
\label{convergence-5}
  2  \langle \hat{e}^n , \hat{e}^{n+1} \rangle
  \le \| \hat{e}^n \|_2^2 + \| \hat{e}^{n+1} \|_2^2 .
\end{equation}

For the nonlocal linear term on the right had side, we begin with a rewritten form:
\begin{align}
- 2 \varepsilon^2 \langle {\cal L}_N \hat{e}^{n+1} , \hat{e}^{n+1} \rangle
& = - 2 \varepsilon^2 \langle (J \os 1) \hat{e}^{n+1} - J \os \hat{e}^{n+1} ,
   \hat{e}^{n+1} \rangle   \nonumber \\
& =
    - 2 \varepsilon^2 (J \os 1) \| \hat{e}^{n+1} \|_2^2
   + 2 \varepsilon^2 \langle J \os \hat{e}^{n+1} ,
    \hat{e}^{n+1} \rangle  .  \label{convergence-6-1}
\end{align}
Meanwhile, for the term $2 \varepsilon^2 \langle J \os \hat{e}^{n+1} ,    \hat{e}^{n+1} \rangle$,
we apply \eqref{lem 1:0} in Lemma \ref{lem:1} and obtain
\begin{align}
2 \varepsilon^2 \langle J \os \hat{e}^{n+1} ,  \hat{e}^{n+1} \rangle
&= - 2 \varepsilon^2 \langle J \os \hat{e}^{n+1} ,
   \Delta_N ( (-\Delta_N)^{-1} \hat{e}^{n+1} ) \rangle  \nonumber\\
&\le
    \frac{\gamma_0}{2} \| \hat{e}^{n+1} \|^2_2 + \frac{C_3}{\gamma_0}
   \| \nabla_N (-\Delta_N)^{-1} \hat{e}^{n+1} \|^2_2  \nonumber\\
&\le
    \frac{\gamma_0}{2} \| \hat{e}^{n+1} \|^2_2 + \frac{C_3}{\gamma_0}
   \| \hat{e}^{n+1} \|_{-1,N}^2 ,  \label{convergence-6-2}
\end{align}
with $C_3$ only depends on $C_2$ and $\varepsilon$.
Subsequently, a combination of \eqref{convergence-6-1} and \eqref{convergence-6-2} yields
\begin{equation}
\label{convergence-6-3}
  - 2 \varepsilon^2 \langle {\cal L}_N \hat{e}^{n+1} , \hat{e}^{n+1} \rangle
  \le - 2 \varepsilon^2 (J \os 1) \| \hat{e}^{n+1} \|_2^2
  + \frac{\gamma_0}{2} \| \hat{e}^{n+1} \|^2_2 + \frac{C_3}{\gamma_0}
   \| \hat{e}^{n+1} \|_{-1,N}^2 .
\end{equation}

For the nonlinear inner product on the right hand side of \eqref{convergence-2}, we begin with a rewritten form:
\begin{equation}
\label{convergence-7-1}
- 2  \langle ( \hat{U}^n )^3 - (u^n)^3 , \hat{e}^{n+1} \rangle
= - 2  \langle ( \hat{U}^n )^3 - (u^n)^3 , \hat{e}^n \rangle
- 2  \langle ( \hat{U}^n )^3 - (u^n)^3 , \hat{e}^{n+1} - \hat{e}^n \rangle .
\end{equation}
Because of the following nonlinear expansion
\begin{equation}
( \hat{U}^n )^3 - (u^n)^3 = ( ( \hat{U}^n )^2 + \hat{U}^n u^n + (u^n)^2 ) \hat{e}^n ,
\end{equation}
we see that the first term on the right hand side of \eqref{convergence-7-1} is always non-positive:
\begin{equation}
\label{convergence-7-3}
- 2  \langle ( \hat{U}^n )^3 - (u^n)^3 , \hat{e}^n \rangle
= - 2  \langle ( \hat{U}^n )^2 + \hat{U}^n u^n  + (u^n)^2 ,   ( \hat{e}^n )^2 \rangle \le 0 .
\end{equation}
The other term on the right hand side of \eqref{convergence-7-1} could be represented as
\begin{equation}
- 2  \langle ( \hat{U}^n )^3 - (u^n)^3 , \hat{e}^{n+1} - \hat{e}^n \rangle
= - 2  \langle ( ( \hat{U}^n )^2 + \hat{U}^n u^n  + (u^n)^2 ) \hat{e}^n ,  \hat{e}^{n+1} - \hat{e}^n \rangle  .
\end{equation}
On the other hand, the $\| \cdot \|_\infty$ estimate \eqref{consistency-4-3} for $\hat{U}$
and the a-priori bound \eqref{a priori-2} have implied that
\begin{equation}
\label{convergence-7-5-0}
\| \hat{U}^n \|_\infty \le M_0 , \quad \| u^n \|_\infty \le M_0 .
\end{equation}
These facts yield the following estimate
\begin{equation}
  \| ( \hat{U}^n )^2 + \hat{U}^n u^n + (u^n)^2 \|_\infty \le 3 M_0^2 .
\end{equation}
In turn, we obtain the following inequality
\begin{align}
- 2  \langle ( \hat{U}^n )^3 - (u^n)^3 , \hat{e}^{n+1} - \hat{e}^n \rangle
& \le 2  \| ( \hat{U}^n )^2 + \hat{U}^n u^n + (u^n)^2 \|_\infty
  \cdot \| \hat{e}^n \|_2 \cdot \|  \hat{e}^{n+1} - \hat{e}^n \|_2 \nonumber\\
& \le
  6 M_0^2 \| \hat{e}^n \|_2 \cdot \|  \hat{e}^{n+1} - \hat{e}^n \|_2  \nonumber\\
& \le
   \frac{\gamma_0}{2} \| \hat{e}^n \|_2^2
   + \frac{18 M_0^4}{\gamma_0} \|  \hat{e}^{n+1} - \hat{e}^n \|_2^2 .    \label{convergence-7-6}
\end{align}
As a consequence, a substitution of \eqref{convergence-7-3} and \eqref{convergence-7-6} into \eqref{convergence-7-1} gives
\begin{equation}
\label{convergence-7-7}
- 2  \langle ( \hat{U}^n )^3 - (u^n)^3 , \hat{e}^{n+1} \rangle
\le \frac{\gamma_0}{2} \| \hat{e}^n \|_2^2 + \frac{18 M_0^4}{\gamma_0} \|  \hat{e}^{n+1} - \hat{e}^n \|_2^2  .
\end{equation}

Therefore, a substitution of \eqref{convergence-3}-\eqref{convergence-5},
\eqref{convergence-6-3} and \eqref{convergence-7-7} into \eqref{convergence-2} results in
\begin{align}
& \quad~
 \| \hat{e}^{n+1} \|_{-1,N}^2 - \| \hat{e}^n \|_{-1,N}^2
 + A  \dt ( \| \hat{e}^{n+1} \|_2^2 - \| \hat{e}^n \|_2^2 )
   + \Bigl( A - \frac{18 M_0^4}{\gamma_0} \Bigr) \dt \| \hat{e}^{n+1} - \hat{e}^n \|_2^2  \nonumber\\
& \qquad
   +  \Bigl( 2 \varepsilon^2 (J \os 1) -1 - \frac{\gamma_0}{2} \Bigr)
    \dt \| \hat{e}^{n+1} \|_2^2  \nonumber\\
& \le
       ( 1 + \frac{\gamma_0}{2} )  \dt \| \hat{e}^n \|_2^2
     + ( 1 + \frac{C_3}{\gamma_0} ) \dt \| \hat{e}^{n+1} \|_{-1,N}^2 + \dt \| \tau_2^{n+1} \|_{-1,N}^2 . 
\end{align}
Under the constraint \eqref{condition-A-1} for the regularization parameter $A$,
and making use of  the diffusivity condition \eqref{assump_gamma}, we get
\begin{align}
& \quad~
 \| \hat{e}^{n+1} \|_{-1,N}^2 - \| \hat{e}^n \|_{-1,N}^2
 + A  \dt ( \| \hat{e}^{n+1} \|_2^2 - \| \hat{e}^n \|_2^2 )
   +  \Bigl( 1 + \frac{3 \gamma_0}{2} \Bigr)
    \dt \| \hat{e}^{n+1} \|_2^2  \nonumber\\
& \le
     ( 1 + \frac{\gamma_0}{2} )  \dt \| \hat{e}^n \|_2^2
     + ( 1 + \frac{C_3}{\gamma_0} ) \dt \| \hat{e}^{n+1} \|_{-1,N}^2 + \dt \| \tau_2^{n+1} \|_{-1,N}^2 .  
\end{align}
Subsequently, an application of discrete Gronwall inequality results in the desired convergence estimate:
\begin{equation}
\label{convergence-8-3}
   \| \hat{e}^{n+1} \|_{-1,N}  + \Bigl( \gamma_0 \dt   \sum_{k=1}^{n+1} \| \hat{e}^k \|_2^2 \Bigr)^{1/2} \le C^* ( \dt^3 + h^m) ,
\end{equation}
due to the fact that $\| \tau_2^k \|_{-1,N} \le C (\dt^3 + h^m)$, for $k \le n+1$.

Moreover, we have to recover the a-priori assumption \eqref{a priori-1} at time instant $t_{n+1}$,
so that the analysis could be carried out in the induction style.
An application of an inverse inequality to the convergence estimate \eqref{convergence-8-3} implies that
\begin{align}
& \| \hat{e}^{n+1} \|_\infty \le \frac{C \| \hat{e}^{n+1} \|_{-1,N} }{h^{2}}
\le  \frac{C C^* (\dt^3 + h^m) }{h^{2}}
    \le   \frac{C' C^* (h^3 + h^m) }{h^{2}}
    \le   \frac{C_4 C^* h^3 }{h^{2}}
    =
   C_4 C^* h \le \frac12 , \nonumber\\
&
    \text{provided that $h \le  \frac{1}{2 C_4 C^*}  $} , 
\end{align}
in which we have used the linear refinement path constraint $\dt \le C h$, as well as the fact that $m \ge 3$.
This completes the error estimate for $\hat{e}$,
the numerical error between the numerical solution $\phi$ and the constructed approximation solution $\hat{U}$.

Finally, the error estimate \eqref{convergence-0} is a direct consequence of the following identity
\begin{equation}
e^k = \hat{e}^k - \dt U^1_{\dt} - \dt^2 U^2_{\dt} ,
\end{equation}
which comes from the construction \eqref{consistency-1}, as well as the fact that
\begin{equation}
 \| (U^1_{\dt} )^k \|_2 \le C ,  \quad  \| ( U^2_{\dt} )^k \|_2 \le C ,  \quad
 \text{for any $k \ge 0$} .
\end{equation}
The proof for Theorem \ref{thm:convergence-Hm1} is completed.

\subsection{Theoretical justification of the energy stability}

It has been proved in \cite{du18} that
the energy stability for the numerical scheme \eqref{scheme-1st-1} is valid under the condition \eqref{condition-A-0}.
In addition, the convergence analysis reveals that
the $\| \cdot \|_\infty$ bound \eqref{convergence-7-5-0} for the numerical solution is available
as long as another constraint~\eqref{condition-A-1} for $A$ is valid,
so that the convergence analysis could pass through.
The following corollary provides a theoretical justification of the energy stability.

\begin{corollary}
Under the assumptions of Theorem \ref{thm:convergence-Hm1},
the energy stability, namely, $E_N (u^{n+1}) \le E_N (u^n)$, is valid,
under the following constraint for the regularization parameter $A$: 	
\begin{equation}
A \ge \max \Bigl\{ \frac{18 M_0^4}{\gamma_0} , \frac32 M_0^2 - \frac12 \Bigr\} ,  \quad \mbox{with} \, \, \,
M_0 =  1 + \max_{1\le k \le N_k} \| u_N^k \|_\infty .
\end{equation}
\end{corollary}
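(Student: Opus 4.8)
The plan is to combine the two previously established pieces—the energy-stability result from~\cite{du18} under condition~\eqref{condition-A-0}, and the uniform $\|\cdot\|_\infty$ bound on the numerical solution furnished by Theorem~\ref{thm:convergence-Hm1}—into a single self-consistent statement. First I would recall from the convergence analysis (specifically the a-priori bound~\eqref{convergence-7-5-0} and the inductive recovery of~\eqref{a priori-1}) that, once $A$ satisfies~\eqref{condition-A-1}, the numerical solution obeys $\|u^k\|_\infty \le M_0$ for all $k$ with $k\dt \le T$, where $M_0 = 1 + \max_{1\le k\le N_k}\|u_N^k\|_\infty$. Thus the $\ell^\infty$ bounds $\|u^n\|_\infty, \|u^{n+1}\|_\infty \le M_0$ are not assumed but \emph{derived}, removing the gap in~\cite{du18}.

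Next I would substitute these bounds into the original energy-stability requirement~\eqref{condition-A-0}. Since $\|u^{n+1}\|_\infty^2, \|u^n\|_\infty^2 \le M_0^2$, the right-hand side of~\eqref{condition-A-0} satisfies $\tfrac12\|u^{n+1}\|_\infty^2 + \|u^n\|_\infty^2 - \tfrac12 \le \tfrac32 M_0^2 - \tfrac12$. Hence~\eqref{condition-A-0} is implied by the sufficient condition $A \ge \tfrac32 M_0^2 - \tfrac12$, which is computable purely in terms of the data and the exact solution. Taking the maximum of this threshold with the convergence threshold $\tfrac{18 M_0^4}{\gamma_0}$ from~\eqref{condition-A-1} gives the stated constraint $A \ge \max\{\tfrac{18 M_0^4}{\gamma_0}, \tfrac32 M_0^2 - \tfrac12\}$; under this choice both the hypotheses of Theorem~\ref{thm:convergence-Hm1} and the hypothesis~\eqref{condition-A-0} of the result in~\cite{du18} hold simultaneously, so the energy decay $E_N(u^{n+1}) \le E_N(u^n)$ follows directly.

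The one subtlety worth addressing carefully is the logical structure: the $\ell^\infty$ bound is obtained through an induction in $n$ within the proof of Theorem~\ref{thm:convergence-Hm1}, so I would emphasize that for every step $n$ at which the energy inequality is claimed, the bound $\|u^n\|_\infty \le M_0$ has already been validated by that induction (which requires $\dt$, $h$ small and the refinement path $\dt \le Ch$, exactly the assumptions inherited from Theorem~\ref{thm:convergence-Hm1}). There is no circularity, because the energy-stability proof of~\cite{du18} under~\eqref{condition-A-0} does not itself feed back into the convergence argument. I expect the main obstacle to be purely expository—making transparent that the $\|\cdot\|_\infty$ control is genuinely unconditional (modulo the mild step-size constraint) rather than a standing assumption—while the algebraic reduction from~\eqref{condition-A-0} to $\tfrac32 M_0^2 - \tfrac12$ and the invocation of~\cite{du18} are routine.
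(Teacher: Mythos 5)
Your proposal is correct and follows exactly the argument the paper intends: the convergence analysis under $A \ge \frac{18 M_0^4}{\gamma_0}$ delivers the uniform bound $\|u^k\|_\infty \le M_0$, which turns the conditional requirement \eqref{condition-A-0} from \cite{du18} into the explicit sufficient condition $A \ge \frac32 M_0^2 - \frac12$, and the stated constraint is simply the maximum of the two thresholds. Your remark on the induction ordering and the absence of circularity is a fair expository addition but does not change the substance.
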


\section{Concluding remarks}

In this work, we present detailed error estimates
for a first order stabilized semi-implicit numerical scheme for the nonlocal Cahn--Hilliard equation,
where the Fourier pseudo-spectral method is used for the spatial discretization.
We consider the discrete $H^{-1}$ norm for the error function to establish the convergence result, which avoids the complicated analysis on the nonlinear term.
In order to bound the error function in the $\ell^\infty$ norm,
we combine the standard technique for the convergence analysis with a higher order consistency estimate
to ensure the convergence order high enough to use the inverse inequality.
As a result of the $\ell^\infty$ boundness of the error function,
we derive the uniform $\ell^\infty$ bound of the numerical solution,
and then, the energy stability of the numerical scheme, obtained in \cite{du18},
is improved by requiring a new assumption on the stabilizer.

It is worth mentioning that
we use the higher order consistency analysis to pick up only the temporal truncated error
since the spatial spectral accuracy ${O}(h^m)$ is sufficient as long as $m$ is large enough.
However, if one considers the lower order spatial approximations,
for instance, the finite difference and finite element methods,
the truncated error is usually of the order two
and the higher order consistency estimate is also necessary to pick up the spatial truncated error,
see, e.g., \cite{guan17a,guan14a} and references therein.

\section*{Acknowledgments}
X. Li's work is partially supported by NSFC grant 11801024.
Z. Qiao's work is partially supported by the Hong Kong Research Council GRF grants 15325816 and 15300417.
C. Wang's work is partially supported by NSF grant NSF DMS-1418689.

\bibliographystyle{plain}
\bibliography{draft0}

\begin{thebibliography}{10}

\bibitem{archer04a}
A.~Archer and R.~Evans.
\newblock Dynamical density functional theory and its application to spinodal
  decomposition.
\newblock {\em J. Chem. Phys.}, 121:4246--4254, 2004.

\bibitem{archer04b}
A.~Archer and M.~Rauscher.
\newblock Dynamical density functional theory for interacting {Brownian}
  particles: Stochastic or deterministic?
\newblock {\em J. Phys. A: Math. Gen.}, 37:9325, 2004.

\bibitem{baskaran13b}
A.~Baskaran, J.~Lowengrub, C.~Wang, and S.M. Wise.
\newblock Convergence analysis of a second order convex splitting scheme for
  the modified phase field crystal equation.
\newblock {\em SIAM J. Numer. Anal.}, 51:2851--2873, 2013.

\bibitem{bates06a}
P.~Bates.
\newblock On some nonlocal evolution equations arising in materials science.
\newblock In Hermann Brunner, Xiao-Qiang Zhao, and Xingfu Zou, editors, {\em
  Nonlinear Dynamics and Evolution Equations}, volume~48 of {\em Fields
  Institute Communications}, pages 13--52. American Mathematical Society,
  Providence, RI; USA, 2006.

\bibitem{bates09}
P.~Bates, S.~Brown, and J.~Han.
\newblock Numerical analysis for a nonlocal {Allen-Cahn} equation.
\newblock {\em Int. J. Numer. Anal. Model.}, 6:33--49, 2009.

\bibitem{bates05b}
P.~Bates and J.~Han.
\newblock The {Dirichlet} boundary problem for a nonlocal {Cahn-Hilliard}
  equation.
\newblock {\em J. Math. Anal. Appl.}, 311:289, 2005.

\bibitem{bates05a}
P.~Bates and J.~Han.
\newblock The {Neumann} boundary problem for a nonlocal {Cahn-Hilliard}
  equation.
\newblock {\em J. Diff. Eqs.}, 212:235--277, 2005.

\bibitem{bates06b}
P.~Bates, J.~Han, and G.~Zhao.
\newblock On a nonlocal phase-field system.
\newblock {\em Nonlinear Analysis: Theory, Methods and Applications},
  64:2251--2278, 2006.

\bibitem{cahn58}
J.~Cahn and J.~Hilliard.
\newblock Free energy of a nonuniform system. {I}. {I}nterfacial free energy.
\newblock {\em J. Chem. Phys.}, 28:258, 1958.

\bibitem{du12a}
Q.~Du, M.~Gunzburger, R.~Lehoucq, and K.~Zhou.
\newblock Analysis and approximation of nonlocal diffusion problems with volume
  constraints.
\newblock {\em SIAM Rev.}, 54:667--696, 2012.

\bibitem{du18}
Q.~Du, L.~Ju, X.~Li, and Z.~Qiao.
\newblock Stabilized linear semi-implicit schemes for the nonlocal
  {Cahn-Hilliard} equation.
\newblock {\em J. Comput. Phys.}, 363:39--54, 2018.

\bibitem{du19}
Q.~Du, L.~Ju, X.~Li, and Z.~Qiao.
\newblock Maximum principle preserving exponential time differencing schemes
  for the nonlocal {Allen-Cahn} equation.
\newblock {\em SIAM J. Numer. Anal.}, 57:876--898, 2019.

\bibitem{du16}
Q.~Du and J.~Yang.
\newblock Asymptotically compatible fourier spectral approximations of nonlocal
  allen-cahn equations.
\newblock {\em SIAM J. Numer. Anal.}, 54:1899--1919, 2016.

\bibitem{duan19c}
C.~Duan, C.~Liu, C.~Wang, and X.~Yue.
\newblock Convergence analysis of a numerical scheme for the porous medium
  equation by an energetic variational approach.
\newblock {\em Numer. Math. Theor. Meth. Appl.}, 13:1--18, 2020.

\bibitem{E95}
W.~E and J.-G. Liu.
\newblock Projection method {I}: {Convergence} and numerical boundary layers.
\newblock {\em SIAM J. Numer. Anal.}, 32:1017--1057, 1995.

\bibitem{E02}
W.~E and J.-G. Liu.
\newblock Projection method {III}. {Spatial} discretization on the staggered
  grid.
\newblock {\em Math. Comp.}, 71:27--47, 2002.

\bibitem{eyre98}
D.~Eyre.
\newblock Unconditionally gradient stable time marching the {C}ahn-{H}illiard
  equation.
\newblock In J.~W. Bullard, R.~Kalia, M.~Stoneham, and L.Q. Chen, editors, {\em
  Computational and Mathematical Models of Microstructural Evolution},
  volume~53, pages 1686--1712, Warrendale, PA, USA, 1998. Materials Research
  Society.

\bibitem{fife03}
P.C. Fife.
\newblock Some nonclassical trends in parabolic and parabolic-like evolutions.
\newblock In M~Kirkilionis, S.~Kromker, R.~Rannacher, and F.~Tomi, editors,
  {\em Trends in Nonlinear Analysis}, chapter~3, pages 153--191. Springer,
  2003.

\bibitem{gajewski03}
H.~Gajewski and K.~Zacharias.
\newblock On a nonlocal phase separation model.
\newblock {\em J. Math. Anal. Appl.}, 286:11--31, 2003.

\bibitem{GAL20175253}
C.~G. Gal, A.~Giorgini, and M.~Grasselli.
\newblock The nonlocal cahn-hilliard equation with singular potential:
  Well-posedness, regularity and strict separation property.
\newblock {\em J. Diff. Eqns.}, 263(9):5253 -- 5297, 2017.

\bibitem{giacomin98}
G.~Giacomin and J.~Lebowitz.
\newblock Dynamical aspects of the {Cahn-Hilliard} equation.
\newblock {\em SIAM J. Appl. Math.}, 58:1707--1729, 1998.

\bibitem{guan17a}
Z.~Guan, J.S. Lowengrub, and C.~Wang.
\newblock Convergence analysis for second order accurate schemes for the
  periodic nonlocal {Allen-Cahn} and {Cahn-Hilliard} equations.
\newblock {\em Math. Methods Appl. Sci.}, 40(18):6836--6863, 2017.

\bibitem{guan14b}
Z.~Guan, J.S. Lowengrub, C.~Wang, and S.M. Wise.
\newblock Second-order convex splitting schemes for nonlocal {Cahn-Hilliard}
  and {Allen-Cahn} equations.
\newblock {\em J. Comput. Phys.}, 277:48--71, 2014.

\bibitem{guan14a}
Z.~Guan, C.~Wang, and S.M. Wise.
\newblock A convergent convex splitting scheme for the periodic nonlocal
  {Cahn-Hilliard} equation.
\newblock {\em Numer. Math.}, 128:377--406, 2014.

\bibitem{hornthrop01}
D.~Hornthrop, M.~Katsoulakis, and D.~Vlachos.
\newblock Spectral methods for mesoscopic models of pattern formation.
\newblock {\em J. Comput. Phys.}, 173:364--390, 2001.

\bibitem{LiD2017}
D.~Li and Z.~Qiao.
\newblock On second order semi-implicit fourier spectral methods for {2D
  Cahn-Hilliard} equations.
\newblock {\em J. Sci. Comput.}, 70:301--341, 2017.

\bibitem{LiD2017b}
D.~Li and Z.~Qiao.
\newblock On the stabilization size of semi-implicit fourier-spectral methods
  for {3D Cahn-Hilliard} equations.
\newblock {\em Commun. Math. Sci.}, 15:1489--1506, 2017.

\bibitem{LiD2016a}
D.~Li, Z.~Qiao, and T.~Tang.
\newblock Characterizing the stabilization size for semi-implicit
  {Fourier-spectral} method to phase field equations.
\newblock {\em SIAM J. Numer. Anal.}, 54:1653--1681, 2016.

\bibitem{qiao14}
Z.~Qiao and S.~Sun.
\newblock Two-phase fluid simulation using a diffuse interface model with
  peng-robinson equation of state.
\newblock {\em SIAM J. Sci. Comput.}, 36:B708--B728, 2014.

\bibitem{STWW03}
R.~Samelson, R.~Temam, C.~Wang, and S.~Wang.
\newblock Surface pressure {Poisson} equation formulation of the primitive
  equations: {Numerical} schemes.
\newblock {\em SIAM J. Numer. Anal.}, 41:1163--1194, 2003.

\bibitem{STWW07}
R.~Samelson, R.~Temam, C.~Wang, and S.~Wang.
\newblock A fourth order numerical method for the planetary geostrophic
  equations with inviscid geostrophic balance.
\newblock {\em Numer. Math.}, 107:669--705, 2007.

\bibitem{ShTaWa11}
J.~Shen, T.~Tang, and L.L. Wang.
\newblock {\em Spectral Methods: Algorithms, Analysis and Applications}.
\newblock Springer, Heidelberg, 2011.

\bibitem{shen10}
J.~Shen and X.F. Yang.
\newblock Numerical approximations of {Allen-Cahn} and {Cahn-Hilliard}
  equations.
\newblock {\em Discrete Contin. Dyn. Syst.}, 28:1669--1691, 2010.

\bibitem{temam01}
R.~Temam.
\newblock {\em Navier-Stokes Equations: Theory and Numerical Analysis}.
\newblock American Mathematical Society, Providence, Rhode Island, 2001.

\bibitem{tian13}
X.~Tian and Q.~Du.
\newblock Analysis and comparison of different approximations to nonlocal
  diffusion and linear peridynamic equations.
\newblock {\em SIAM J. Numer. Anal.}, 51:3458--3482, 2013.

\bibitem{tian14}
X.~Tian and Q.~Du.
\newblock Asymptotically compatible schemes for robust discretization of
  nonlocal models and their local limits.
\newblock {\em SIAM J. Numer. Anal.}, 52:1641--1665, 2014.

\bibitem{Tre00}
L.N. Trefethen.
\newblock {\em Spectral Methods in MATLAB}.
\newblock SIAM, Philadelphia, 2000.

\bibitem{WL00}
C.~Wang and J.-G. Liu.
\newblock Convergence of gauge method for incompressible flow.
\newblock {\em Math. Comp.}, 69:1385--1407, 2000.

\bibitem{WL02}
C.~Wang and J.-G. Liu.
\newblock Analysis of finite difference schemes for unsteady {Navier-Stokes}
  equations in vorticity formulation.
\newblock {\em Numer. Math.}, 91:543--576, 2002.

\bibitem{WLJ04}
C.~Wang, J.-G. Liu, and H.~Johnston.
\newblock Analysis of a fourth order finite difference method for
  incompressible {Boussinesq} equations.
\newblock {\em Numer. Math.}, 97:555--594, 2004.

\bibitem{WangL15}
L.~Wang, W.~Chen, and C.~Wang.
\newblock An energy-conserving second order numerical scheme for nonlinear
  hyperbolic equation with an exponential nonlinear term.
\newblock {\em J. Comput. Appl. Math.}, 280:347--366, 2015.

\bibitem{wise09}
S.M. Wise, C.~Wang, and J.~S. Lowengrub.
\newblock An energy-stable and convergent finite-difference scheme for the
  phase field crystal equation.
\newblock {\em SIAM J. Numer. Anal.}, 47:2269--2288, 2009.

\bibitem{xu06}
C.~Xu and T.~Tang.
\newblock Stability analysis of large time-stepping methods for epitaxial
  growth models.
\newblock {\em SIAM J. Numer. Anal.}, 44:1759--1779, 2006.

\bibitem{zhou10}
K.~Zhou and Q.~Du.
\newblock Mathematical and numerical analysis of linear peridynamic models with
  nonlocal boundary conditions.
\newblock {\em SIAM J. Numer. Anal.}, 48:1759--1780, 2010.

\end{thebibliography}

\end{document}